\def\ni{\noindent}
\def\Pic{\mathrm{Pic}}
\def\Ext{\mathrm{Ext}}
\def\Jac{\mathrm{Jac}}
\def\Orb{\mathrm{Orb}}
\def\bP{\mathbb{P}}
\def\lra{\longrightarrow}
\def\ra{\to}
\def\cO{\mathcal{O}}
\def\L{\mathcal{L}}
\def\I{\mathcal{I}}
\def\E{\mathcal{E}}
\def\cS{\mathcal{S}}
\def\dim{\mathrm{dim}}
\def\ol{\overline}
  \newtheorem{theorem}{Theorem}[section]
\newtheorem{lemma}[theorem]{Lemma}
\newtheorem{prop}[theorem]{Proposition}
\newtheorem{definition}[theorem]{Definition}
\newtheorem{corollary}[theorem]{Corollary}
\newtheorem{remark}[theorem]{Remark}
\newenvironment{dedication}
        {\vspace{6ex}\begin{quotation}\begin{center}\begin{em}}
        {\par\end{em}\end{center}\end{quotation}}
\title{On Treibich-Verdier curves}
\author{Edoardo Sernesi }
\date{}
\begin{document}

\maketitle
\begin{dedication}
A Enrico Arbarello con amicizia e stima
\end{dedication}

\begin{abstract}
    We survey some properties of a class of curves lying on certain elliptic ruled surfaces, 
studied by  A. Treibich and J.L. Verdier in connection with elliptic solitons and KP equations. In particular we discuss their Brill-Noether generality, proved by A. Treibich, and we show that they are limits of hyperplane sections of K3 surfaces.
\end{abstract}

\section{Introduction}

This largely expository note is devoted to the description of some  properties of a class of algebraic curves studied by A. Treibich and J.L. Verdier in connection with elliptic solitons and KP equations. We call them \emph{Treibich-Verdier curves} (shortly \emph{TV-curves}) in what follows. The topic is wide and covers the singular case as well, but we only focus on some aspects related with Brill-Noether theory and K3 surfaces. In \cite{aT93b} Treibich showed    that TV-curves   are Brill-Noether general. Since then  for some time they have been the most concrete known class of  curves with such property, and only recently a new similar class has been discovered, namely the Du Val curves, which are even Petri-general (see \cite{ABFS16}).  From the moduli point of view   TV-curves are very special, being ramified covers of elliptic curves, constructed inside an   elliptic ruled surface;   they owe   their Brill-Noether generality to a specific relation, called ``tangentiality'', between  the elliptic curve and the Abel-Jacobi image of the curve itself    (see Prop. \ref{P:tangential}).  The proof given in \cite{aT93b} is quite ingenious,  combining tangentiality  with a formula of Fay's and elementary properties of the jacobian. We   reproduce it in outline in \S\ \ref{S:brillnoether}. In the final \S\ \ref{S:embedding}, the only novel part of this note, we prove that   TV-curves of any genus $g\ge 3$ are limits of  hyperplane sections of K3 surfaces. This is done by showing that the elliptic ruled surface containing them can be realized birationally as a surface in $\bP^g$ whose hyperplane sections are canonical models of   TV-curves, and that such surface can be smoothed    in $\bP^g$, thus producing K3 surfaces whose hyperplane sections specialize to any given    TV-curve (Theorem \ref{T:K3limit}).

\ni A few interesting questions remain unanswered. Firstly, one would like to know whether   TV-curves are Petri general: Treibich's proof of Brill-Noether generality  apparently  cannot be adapted to give informations on the Petri map. Another  question concerns the corank of the Wahl map. Corollary \ref{C:treibichwahl} shows that the Wahl map $\Phi_{\Gamma_g}$ of a   TV-curve $\Gamma_g$ of any genus $g\ge 3$ is not surjective, but it does not compute its corank. It would be interesting to carry out such computation, in view of the fact that there are no known examples of Brill-Noether general curves of genus $g \ge 13$ whose Wahl map has corank strictly larger than one (compare \cite{CDS20}, Question 2.14).  In fact   corank$(\Phi_{\Gamma_g})$ is    related with   the extendability of the surfaces $S_g$, a property  which seems difficult to detect. Moreover, if one knew that $\Phi_{\Gamma_g}$ has corank  equal to one then it would follow that TV-curves are not hyperplane sections of K3 surfaces but just limits of such  (compare \cite{AB17}, where this question is discussed).

\ni
We work over $\mathbb C$,
the field of complex numbers.

\ni
The structure of the paper is the following. In \S 2 we collect some standard computations that are needed in the rest of the paper. \S 3 is devoted to the property of  tangentiality and to its consequences for TV-curves. In \S 4 we outline Treibich's proof of the Brill-Noether generality, and in the final \S 5 we discuss the relation between TV-curves and K3 surfaces.
\medskip

\textbf{Acknowledgements.} I thank A. Bruno and R. Salvati Manni for  useful conversations, and A. Treibich for kindly providing copies of his publications and for enlightening remarks about this paper.


\section{Some computations}
 We fix 
a projective nonsingular curve $E$  of genus one, and a point $q\in E$ as the origin. We let
 \begin{equation}\label{E:E}
     0 \lra \cO_E \lra \E \lra \cO_E \lra 0
 \end{equation}
be the unique non-split extension in $\Ext^1(\cO_E,\cO_E)$, $X=\bP(\E)$  and $\pi: X\lra E$ the projection. Moreover we let $E_0\subset X$ be the minimal section, of self-intersection $E_0^2=0$,   $f=\pi^{-1}(q)$ and     $p=E_0 \cap f$. Note that $\cO_X(E_0)=\cO_X(1)$,     $K_X=\cO_X(-2E_0)$ and $\chi(\cO_X)=0$.

\begin{lemma}\label{L:Eq}
  Let $S^a\E$ be the $a$-th symmetric power of $\E$, $a \ge 1$. Then:
  \begin{itemize}
      \item[(i)] $S^a\E$ is indecomposable, $h^0(E,S^a\E)=1= h^1(E,S^a\E)$ and $S^a\E\cong S^a\E^\vee$.
  \item[(ii)] For every $k \ge 1$ we have
  $
  h^0(E,S^a\E(kq))=k(a+1), \quad h^1(E,S^a\E(kq))=0
  $
  \end{itemize}
\end{lemma}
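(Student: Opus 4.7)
The key tool is the short exact sequence
$$0 \lra S^{a-1}\E \lra S^a\E \lra \cO_E \lra 0,$$
obtained by multiplying by the subbundle $\cO_E\hookrightarrow \E$ from (\ref{E:E}): the sub is the image of the multiplication map $\cO_E \otimes S^{a-1}\E \to S^a\E$, and the quotient is the symmetric power of the quotient $\cO_E$ of $\E$. Both parts will be proved together by induction on $a$, with $a=1$ reducing to (\ref{E:E}) itself.

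For part (i), self-duality $S^a\E\cong S^a\E^\vee$ is immediate in characteristic zero from the identity $(S^a\E)^\vee = S^a(\E^\vee)$ combined with $\E\cong\E^\vee$ (which holds because $\E$ has rank $2$ with trivial determinant). Riemann--Roch gives $\chi(S^a\E)=0$, and taking determinants in the key sequence yields $\det S^a\E = \cO_E$ by induction. For the cohomology $h^0=h^1=1$ together with indecomposability, I would invoke Atiyah's classification of vector bundles on an elliptic curve: for each $r\ge 1$ there is a unique indecomposable bundle $F_r$ on $E$ with $\det F_r = \cO_E$ and $h^0(F_r)\neq 0$, and it satisfies $h^0(F_r)=h^1(F_r)=1$ with $F_2=\E$. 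It then suffices to identify $S^a\E$ with $F_{a+1}$. A non-zero global section is provided by the inclusion $\cO_E = S^a(\cO_E)\hookrightarrow S^a\E$ (the smallest piece of the filtration above), so what remains is indecomposability. I would deduce this inductively by combining the Clebsch--Gordan decomposition $\E\otimes S^{a-1}\E \cong S^a\E \oplus S^{a-2}\E$ (valid in characteristic $0$ since $\det\E=\cO_E$) with Atiyah's tensor formula $F_2\otimes F_a \cong F_{a+1}\oplus F_{a-1}$: by the inductive hypothesis $S^{a-1}\E=F_a$ and $S^{a-2}\E=F_{a-1}$, so cancelling the common $F_{a-1}$-summand via Krull--Schmidt forces $S^a\E \cong F_{a+1}$.

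For part (ii) I would twist the key sequence by $\cO_E(kq)$ to obtain
$$0 \lra S^{a-1}\E(kq) \lra S^a\E(kq) \lra \cO_E(kq) \lra 0,$$
and induct on $a$. The base case $a=1$ follows from the long exact sequence attached to $0\to\cO_E(kq)\to\E(kq)\to\cO_E(kq)\to 0$: for $k\ge 1$ the term $H^1(\cO_E(kq))$ vanishes, which forces the connecting map to be zero and yields $h^0(\E(kq))=2k$, $h^1(\E(kq))=0$. The inductive step is then formal: $H^1(S^{a-1}\E(kq))=0$ and $H^1(\cO_E(kq))=0$ imply $H^1(S^a\E(kq))=0$, and so $h^0(S^a\E(kq)) = h^0(S^{a-1}\E(kq)) + h^0(\cO_E(kq)) = ka + k = k(a+1)$.

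The main obstacle is the indecomposability clause in (i); once that is in hand, the remaining assertions follow quickly. If one wished to avoid invoking Atiyah's tensor-product formula, an alternative route would be to argue that any hypothetical splitting of the key sequence would force $S^a\E\cong \cO_E\oplus F_a$, and then reach a contradiction by computing $\Hom(L,S^a\E)$ for a suitable non-trivial degree-$0$ line bundle $L$ via the twisted key sequence, leveraging the vanishing $H^0(\E\otimes L^{-1})=0$.
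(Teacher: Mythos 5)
Your proof is correct and follows essentially the same route as the paper: the filtration sequence $0\to S^{a-1}\E\to S^a\E\to\cO_E\to 0$ together with Atiyah's classification, the only difference being that you rederive the identification $S^a\E\cong F_{a+1}$ from the Clebsch--Gordan decomposition and the tensor-product formula $F_2\otimes F_a\cong F_{a+1}\oplus F_{a-1}$ via Krull--Schmidt, whereas the paper simply cites Atiyah's Theorem 9 for this fact and reads off $h^0=h^1=1$ from the extension class generating $H^1(E,S^{a-1}\E)$. (One caveat: the alternative sketched in your final paragraph is not by itself sufficient, since non-splitness of the extension $0\to F_a\to S^a\E\to \cO_E\to 0$ does not immediately exclude a decomposition of $S^a\E$ other than $\cO_E\oplus F_a$; but this aside does not affect your main argument.)
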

\begin{proof}
(i)  By induction on $a$. For $a=1$ the claim is clear because  the exact sequence \eqref{E:E} being non-split, has non-zero coboundary as well as its dual. Therefore $\E\cong F_2$ (in the notations of \cite{mA57}), the unique, up to isomorphism,    indecomposable vector bundle of rank two and degree zero    such that $H^0(E,\E)\ne 0$. 
Assume $a\ge 2$. Then, by loc. cit., Theorem 9, $S^a\E\cong F_{a+1}$,  the unique, up to isomorphism, indecomposable vector bundle of rank $a+1$ with $H^0(E,S^a\E)\ne 0$.    From \eqref{E:E} we deduce a non-split exact sequence:
\begin{equation}\label{E:SE}
    0\lra S^{a-1}\E \lra S^a\E \lra \cO_E \lra 0
\end{equation}
corresponding to a generator of $H^1(E,S^{a-1}\E)$. 
It shows that  $h^i(E,S^a\E)=1$, $i=0,1$.  Moreover $S^a\E\cong S^a\E^\vee$ by the case $a=1$ just proved (or by loc. cit., Corollary 1).

(ii)
   We give the proof only in the case $k=1$: in the other cases it is similar. By induction on $a$. From the exact sequence:
    \begin{equation}\label{E:Eq}
     0 \lra \cO_E(q) \lra \E(q) \lra \cO_E(q)\lra 0   
    \end{equation}
   we deduce that $h^0(E,\E(q))=2$ and $h^1(E,\E(q))=0$, and this takes care of the case $a=1$. Now assume  $a \ge 2$.  From \eqref{E:Eq} we deduce the exact sequence
    \begin{equation}\label{E:Eq2}
        0 \lra S^{a-1}\E(q) \lra S^a\E(q) \lra \cO_E(q)\lra 0
    \end{equation}
   By the inductive hypothesis we have $h^0(E,S^{a-1}\E(q)=a$ and $h^1(E,S^{a-1}\E(q)=0$. Then the cohomology sequence of \eqref{E:Eq2} proves the induction step and the Lemma. 
\end{proof}

\begin{lemma}\label{L:Gamma}
 Let $g\ge 1$. Then:
 \begin{enumerate}[label=(\roman*)]
 \item
 $h^0(X,\cO_X(gE_0))=h^1(X,\cO_X(gE_0))=1$, \quad $h^2(X,\cO_X(gE_0))=0$.
      \item  $h^0(X,\cO_X(gE_0+f))=g+1, \quad h^1(X,\cO_X(gE_0+f))=h^2(X,\cO_X(gE_0+f))=0$.
     \item The base locus of the linear system $|gE_0+f|$ is $\{p\}$.
     \item The general $\Gamma\in |gE_0+f|$ is a nonsingular connected curve of genus $g$.
     \item Every nonsingular $\Gamma\in |gE_0+f|$ satisfies $\cO_\Gamma(\Gamma)=\omega_\Gamma(2p)$ and the image of the restriction map:
     $$
     H^0(X,\cO_X(\Gamma)) \lra H^0(\Gamma,\omega_\Gamma(2p))
     $$
     defines the linear system $|\omega_\Gamma|+2p$.

 \end{enumerate}   
\end{lemma}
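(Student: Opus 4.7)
The plan is to push everything down to $E$ via $\pi$. For (i), since $\cO_X(E_0)=\cO_X(1)$ and the fibres are $\bP^1$, the projection formula gives $\pi_*\cO_X(gE_0)=S^g\E$ and $R^i\pi_*\cO_X(gE_0)=0$ for $i\ge 1$, so the cohomology groups equal those of $S^g\E$ on $E$, and Lemma~\ref{L:Eq}(i) gives the stated values ($h^2$ vanishing by relative dimension). Part (ii) is analogous with $\cO_X(gE_0+f)$, using Lemma~\ref{L:Eq}(ii).

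For (iii) I would first single out $p$ as a base point via the restriction sequence
\[
0 \to \cO_X((g-1)E_0+f) \to \cO_X(gE_0+f) \to \cO_{E_0}(gE_0+f)\cong \cO_E(q) \to 0,
\]
since the image of $H^0$ in $H^0(E,\cO_E(q))$ is $1$-dimensional and generated by the unique section with simple zero at $p$. To rule out other base points I would argue that on any other fibre $f'\ne f$, $\pi_*\cO_X(gE_0+f-f')=S^g\E\otimes\cO_E(q-q')$ is acyclic (by Atiyah's classification, $F_{g+1}$ twisted by a nontrivial degree-zero line bundle has vanishing cohomology), so restriction to $f'$ is an isomorphism and $f'$ carries no base point; while on $f$ itself the image in $H^0(f,\cO(g))$ is a $g$-dimensional hyperplane which must consist of sections vanishing at $p\in f$, so $p$ is the only base point on $f$.

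For (iv), adjunction gives $2g(\Gamma)-2=(gE_0+f)((g-2)E_0+f)=2g-2$. Connectedness will follow from $h^0(\cO_\Gamma)=1$, obtained from the sequence $0\to\cO_X(-\Gamma)\to\cO_X\to\cO_\Gamma\to 0$ once one checks $H^1(X,\cO_X(-\Gamma))=H^1(X,\cO_X((g-2)E_0+f))^\vee=0$ by Serre duality and (ii) (with a direct check for $g=1$). Smoothness is Bertini off the base locus $\{p\}$; at $p$ I would work in local coordinates $t, u$ with $E_0=\{t=0\}$ and $f=\{u=0\}$ and note that a general section $s$ has $s|_{E_0}(u)=cu+O(u^2)$ with $c\ne 0$, so $\partial_u s(p)\ne 0$ and $\Gamma$ is smooth at $p$.

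For (v) the isomorphism $\cO_\Gamma(\Gamma)\cong\omega_\Gamma(2p)$ is forced by adjunction together with $\cO_X(2E_0)|_\Gamma=\cO_\Gamma(2p)$, using $\Gamma\cdot E_0=1$ at $p$. The restriction map has $1$-dimensional kernel (the defining section of $\Gamma$) and $g$-dimensional image, a hyperplane in the $(g+1)$-dimensional $H^0(\omega_\Gamma(2p))$. The hard step, and the actual content of the statement, is to identify this hyperplane with $H^0(\omega_\Gamma)\subset H^0(\omega_\Gamma(2p))$. I would achieve this by showing that any two general members of $|gE_0+f|$ meet at $p$ with multiplicity at least $2$, which in turn follows once the evaluation $H^0(X,\cO_X(\Gamma))\to T_p^*X$ has rank exactly $1$. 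To see this: outside the codimension-one subspace of sections containing $E_0$ the $u$-differential is nonzero by the computation in (iv), while for $s=t\tilde s$ with $\tilde s\in H^0(\cO_X((g-1)E_0+f))$ the $t$-derivative at $p$ equals $\tilde s(p)$, which vanishes because $p$ is a base point of $|(g-1)E_0+f|$ by induction on $g$. Hence all members share a common tangent at $p$, $\Gamma\cap\Gamma'\ge 2p$, and the image lies in $H^0(\omega_\Gamma)$; by dimensions it equals it, which is exactly the linear system $|\omega_\Gamma|+2p$.
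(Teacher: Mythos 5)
Your proof is correct, and parts (i)--(iv) follow essentially the same route as the paper: Leray/projection formula reducing to $S^g\E$ and $S^g\E(q)$ on $E$, restriction to $E_0$ and to the fibres to locate the base locus (the paper handles the fibres $f'\ne f$ by an inductive exact-sequence argument where you invoke acyclicity of $F_{g+1}\otimes\cO_E(q-q')$ from Atiyah's classification --- same content), and adjunction plus Bertini plus transversality with $E_0$ for (iv). The genuine divergence is in (v). The paper runs a cohomological diagram chase comparing $\cO_X(\Gamma)$ with $\cO_X(\Gamma-E_0)$: the subsystem of members containing $E_0$ maps isomorphically onto $H^0(\Gamma,\omega_\Gamma(p))$, so the image of the full restriction map contains $\sigma_p\cdot H^0(\omega_\Gamma(p))=\sigma_p^2\cdot H^0(\omega_\Gamma)$ and equals it by dimension count (containment from below). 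You instead prove containment from above: by the local computation at $p$ you show the evaluation $H^0(X,\cO_X(\Gamma))\to T_p^*X$ has rank one, so all members of $|\Gamma|$ share a tangent direction at $p$, every cut divisor contains $2p$, and the image lands in $|\omega_\Gamma|+2p$, with equality again by dimensions. Your route has the merit of establishing directly the fixed-tangent property that the paper only records afterwards as Remark 2.3 (as a consequence of the Lemma), and it avoids the coincidence $h^0(\omega_\Gamma(p))=h^0(\omega_\Gamma)$ on which the paper's chase implicitly rests; the paper's version is shorter and stays entirely at the level of exact sequences. The only points to tidy are trivial: the appeal to (ii) for $H^1(X,\cO_X((g-2)E_0+f))=0$ in your connectedness argument needs the (immediate) extra check for $g=2$ as well as $g=1$, and the base case $g=1$ of your induction ``$p$ is a base point of $|(g-1)E_0+f|$'' should be read as the statement that $|f|=\{f\}$ passes through $p$.
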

\begin{proof}
(i) From the Leray spectral sequence we get:
    $$
    h^i(X,\cO_X(gE_0))=h^i(E,S^g\E)
    $$
   Then we conclude by Lemma \ref{L:Eq}(i).
   
    (ii) From the Leray spectral sequence we get:
    $$H^i(X,\cO_X(gE_0+f))=H^i(E,S^g\E(q))$$
    Then we conclude by Lemma \ref{L:Eq}(ii).

    (iii) Since $(gE_0+f)\cap E_0=p$, certainly the base locus $\mathrm{Bs}(|gE_0+f|)$ contains $p$. Consider the exact sequence 
     \begin{equation}\label{E:gammaff}
     0\lra \cO_X(gE_0)\lra \cO_X(gE_0+f) \lra \cO_f(gE_0+f)\lra 0    
     \end{equation}
     Lemma \ref{L:Eq}(i) and Leray spectral sequence imply that 
      $$
      h^1(X,\cO_X(gE_0))=h^1(E,S^g\E)=1
      $$
      and    Lemma \ref{L:Eq}(ii) gives $H^1(X,\cO_X(gE_0+f))=0$. Therefore we see that the map
     $$
     H^0(X,\cO_X(gE_0+f)) \lra H^0(f,\cO_f(gE_0+f))
     $$
     has corank one: thus $|gE_0+f|$ has   $p$ as a simple base point on $f$; in particular the general $\Gamma\in |gE_0+f|$ is not tangent to $f$. 
     Let now  $f'=\pi^{-1}(q')$, $q'\ne q$, be another fibre.   The exact sequence
    $$
    0\lra \cO_X((g-1)E_0+f-f')\lra \cO_X(gE_0+f-f') \lra \cO_{E_0}(gE_0+f-f')\lra 0
    $$
    and induction show that 
     \begin{equation}\label{E:fibrevanish}
         H^0(X,\cO_X(gE_0+f-f'))=H^1(X,\cO_X(gE_0+f-f')) =0
     \end{equation}
    for all $g \ge 1$. Therefore from   the following exact sequence:
     $$
     0\lra \cO_X(gE_0+f-f')\lra \cO_X(gE_0+f) \lra \cO_{f'}(gE_0+f) \lra 0
     $$
     we deduce that  the restriction map
     $$
     H^0(X,\cO_X(gE_0+f)) \lra H^0(f', \cO_{f'}(gE_0+f))
     $$
    is surjective. Therefore $|gE_0+f|$ has no base points on $f'$. This concludes the proof of (iii). 

    (iv)   Since $(gE_0+f) \cdot E_0=1$, we see that $\Gamma$ intersects $E_0$ at $p$ with multiplicity one, thus $\Gamma$ is nonsingular at $p$ and it is nonsingular elsewhere by Bertini. Moreover by adjunction we get:
    $$
    2g(\Gamma)-2 = ((g-2)E_0+f)\cdot (gE_0+f)=2g-2
    $$
    and $g(\Gamma)=g$.

    (v) We have 
    $$
    \cO_\Gamma(\Gamma) = \cO_\Gamma(\Gamma-2E_0)\otimes \cO_\Gamma(2E_0)= \omega_\Gamma\otimes\cO_\Gamma(2p)=\omega_\Gamma(2p)
    $$
    The last assertion follows easily by looking at the following diagram:
    $$
    \xymatrix{
0\ar[r]&\cO_X \ar[r]&\cO_X(\Gamma) \ar[r]&\omega_\Gamma(2p)\ar[r]&0 \\
0\ar[r]&\cO_X(-E_0)\ar[u]^-{E_0}\ar[r]&\cO_X((g-1)E_0+f)\ar[u]^-{E_0}\ar[r]&\omega_\Gamma(p)\ar[r]\ar[u]^-p&0
    }
    $$
\end{proof}

\begin{remark}\label{R:tangency}\rm
From  Lemma \ref{L:Gamma} it follows that all curves of $|\Gamma|$  have a fixed tangent at $p$ along a direction trasversal to both $E_0$ and $f$. 
\end{remark}
     
\begin{definition}\label{D:treibC}
    Let $g \ge 1$. A \emph{Treibich-Verdier curve} (shortly a \emph{TV-curve})  of genus $g$ is a smooth curve $\Gamma_g\in |gE_0+f|$.
\end{definition}

 Next we investigate some remarkable properties of TV-curves.


\section{Tangentiality}

\subsec\rm Let $\varphi: (C,p) \lra (D,q)$ be a   finite morphism of projective smooth connected pointed curves of positive genera, and let $\Jac(C)$ and $\Jac(D)$ be their jacobian varieties,  parametrizing isomorphism classes of invertible sheaves of degree zero. Then we have the following homomorphisms:
\begin{enumerate}
    \item $\varphi^*:\Jac(D) \lra \Jac(C)$, \quad $L \mapsto \varphi^*L$.
    \item The norm  map 
    $$
    \mathrm{Nm}(\varphi): \Jac(C)\lra \Jac(D), \quad M \mapsto  \det(\varphi_*M)\otimes \det(\varphi_*\cO_C)^{-1}
    $$
    \item The Abel maps 
    $$
    \mathrm{Ab}_p: C \lra \Jac(C), \quad x\mapsto \cO_C(x-p)
    $$
    and
    $$
    \mathrm{Ab}_q: D \lra \Jac(D), \quad y \mapsto \cO_D(y-q)
    $$
    \item\label{enum:etale} The composition
    $$
    \iota_\varphi := \varphi^*\cdot \mathrm{Ab}_q: D \lra \Jac(C)
    $$
    It is  etale onto its image, since $\mathrm{Nm}(\varphi)\cdot\varphi^*$ is multiplication by $\deg(\varphi)$.
\end{enumerate}

 \begin{definition}[\cite{aT89}, Def. 1.6]\label{Def:tang}
     $\varphi$ is called a \emph{tangential cover} if $\iota_\varphi(D)$ is tangent to $\mathrm{Ab}_p(C)$ at $0\in \Jac(C)$.
 \end{definition}
In \cite{aT89,TV89,TV91} various properties of tangential covers are studied, even in the singular case, mainly in the case when $D$ has genus one. We will  only recall what we need for the study of TV-curves. 

\subsec\rm We  go back to the notations introduced in the previous sections. 
 We fix $g\ge 3$ and a TV-curve of genus $g$, which  for short we denote by $\Gamma$. We denote by   
$$
\pi_\Gamma:=\pi_{|\Gamma}: (\Gamma,p)\lra (E,q)
$$
the degree $g$ cover of pointed curves obtained by restricting $\pi: X \lra E$ to $\Gamma$. 
   Then $\mathrm{Ab}_q(\alpha)=\cO_E(\alpha-q)$ and
$$
\iota_\pi: E \lra \Jac(\Gamma), \quad \alpha\mapsto \pi_\Gamma^*(\alpha)\otimes\pi_\Gamma^*(q)^{-1}
$$
By \ref{enum:etale} this map is etale onto its image;   it is an embedding if and only if $\pi_\Gamma$ does not factorize via a cyclic etale cover $h:E'\lra E$  (\cite{BL04}, Prop. 11.4.3).
 The  following is a fundamental property of $\pi_\Gamma$:  
 \begin{prop}[\cite{TV91}, Cor. 3.10]\label{P:tangential}
$\pi_\Gamma$ is a tangential cover.
 \end{prop}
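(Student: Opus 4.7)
The plan is to reduce the tangentiality assertion to the equality of two codimension-one subspaces of $H^0(\Gamma,\omega_\Gamma)$, and then verify that equality using the geometry of $\Gamma\subset X$.

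First I would dualize. Via Serre duality $T_0\Jac(\Gamma)=H^1(\Gamma,\O_\Gamma)=H^0(\Gamma,\omega_\Gamma)^\vee$, the line $d\mathrm{Ab}_p(T_p\Gamma)$ has annihilator the hyperplane $H^0(\omega_\Gamma(-p))=\ker(\mathrm{ev}_p)$, while $d\iota_\pi(T_qE)$ factors through the pullback $\pi_\Gamma^*:H^1(E,\O_E)\to H^1(\Gamma,\O_\Gamma)$ and has annihilator $\ker(\mathrm{tr})$, where $\mathrm{tr}:H^0(\omega_\Gamma)\to H^0(\omega_E)$ is the Serre dual of $\pi_\Gamma^*$, i.e.\ the trace. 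Since $\pi_\Gamma$ is unramified at $p$ by Remark \ref{R:tangency}, the form $\pi_\Gamma^*\omega_E$ lies in neither kernel, so both are hyperplanes of dimension $g-1$. Tangentiality is thus equivalent to the inclusion $\ker(\mathrm{tr})\subseteq\ker(\mathrm{ev}_p)$.

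Second I would identify $\ker(\mathrm{tr})$ via the adjunction sequence on $X$. Since $K_X=-2E_0$,
$$0 \lra \O_X(-2E_0) \lra \O_X((g-2)E_0+f) \lra \omega_\Gamma \lra 0.$$
Pushing forward by $\pi$, one has $\pi_*\O_X(-2E_0)=\pi_*\omega_X=0$ and $R^1\pi_*\omega_X\cong\omega_E$ by relative duality, and the resulting connecting map $\pi_{\Gamma*}\omega_\Gamma\to\omega_E$ coincides up to a nonzero scalar with the trace (indeed $\Hom_{\O_E}(\pi_{\Gamma*}\omega_\Gamma,\omega_E)\cong H^0(\Gamma,\O_\Gamma)=\mathbb{C}$ by Grothendieck duality). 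Taking $H^0$ and using Lemmas \ref{L:Gamma}(ii) and \ref{L:Eq}(ii) then yields
$$0 \to H^0(X,\O_X((g-2)E_0+f)) \xrightarrow{\mathrm{res}} H^0(\Gamma,\omega_\Gamma) \xrightarrow{\mathrm{tr}} H^0(E,\omega_E) \to 0,$$
so $\ker(\mathrm{tr})$ is precisely the image of the restriction map from $X$.

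Third, and this is the heart of the argument, I would show that $p$ is a base point of $|(g-2)E_0+f|$ on $X$. From
$$0 \lra \O_X((g-3)E_0+f) \lra \O_X((g-2)E_0+f) \lra \O_{E_0}((g-2)E_0+f) \lra 0,$$
the key input is $\O_X(E_0)|_{E_0}\cong\O_{E_0}$, which follows from $E_0^2=0$ together with the fact that the defining section of $E_0$ trivializes this degree-zero line bundle on $E_0$. Hence $\O_{E_0}((g-2)E_0+f)\cong\O_{E_0}(p)\cong\O_E(q)$, and $h^0(\O_E(q))=1$ with the unique section having divisor exactly $q$. Therefore every global section of $\O_X((g-2)E_0+f)$ restricts to $E_0$ as a section vanishing at $p=q$, so it vanishes at $p\in X$. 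Combined with the second step, $\ker(\mathrm{tr})\subseteq H^0(\omega_\Gamma(-p))=\ker(\mathrm{ev}_p)$; the equal dimension count $g-1$ on both sides then gives equality, which is the asserted tangentiality.

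The main obstacle is the base-point statement in the third paragraph. It is where the special geometry of this ruled surface (through $E_0^2=0$ and $\O_X(E_0)|_{E_0}\cong\O_{E_0}$) enters decisively, and it is the cohomological shadow of the prescribed-tangent property of $\Gamma$ at $p$ recorded in Remark \ref{R:tangency}.
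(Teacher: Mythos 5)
Your proof is correct, but it is essentially the Serre dual of the paper's argument rather than the same one. The paper works directly in $H^1(\Gamma,\cO_\Gamma)$: it identifies $d\iota_\pi(\eta)$ with the image of the coboundary of $\pi_\Gamma^*(\eta)$ and kills it in $H^1(\Gamma,\cO_\Gamma(p))$ by exhibiting the splitting $\zeta:\pi_\Gamma^*\E\to\cO_\Gamma(p)$ obtained by restricting the tautological quotient $\pi^*\E\to\cO_X(E_0)$ to $\Gamma$ (using $E_0\cdot\Gamma=p$). You instead pass to annihilators in $H^0(\Gamma,\omega_\Gamma)$, identify $\ker(\mathrm{tr})$ with the image of the adjoint system $|K_X+\Gamma|=|(g-2)E_0+f|$, and prove that $p$ is a base point of that system. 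Both arguments hinge on the same geometric input ($E_0$ meets $\Gamma$ transversally at the single point $p$, and $N_{E_0/X}$ is trivial), but your route trades the paper's explicit diagram chase for a linear-system computation; its mild extra cost is the identification of the Leray connecting map with the trace, which your $\Hom_{\cO_E}(\pi_{\Gamma*}\omega_\Gamma,\omega_E)\cong\mathbb{C}$ observation does handle, together with the nonvanishing of that connecting map (guaranteed by $R^1\pi_*\cO_X((g-2)E_0+f)=0$ for $g\ge 3$, the range considered in the paper). One small slip in your third step: the canonical section of $\cO_X(E_0)$ vanishes identically on $E_0$, so it does not trivialize $\cO_X(E_0)|_{E_0}$. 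The triviality is nevertheless true: $E_0$ is the section of $\pi$ determined by the quotient $\E\to\cO_E$ in \eqref{E:E}, so its normal bundle is $\Hom(\cO_E,\cO_E)\cong\cO_E$ (equivalently, $\cO_X(1)|_{E_0}$ is that quotient line bundle, and $\cO_X(E_0)=\cO_X(1)$). With that justification repaired, your argument is complete.
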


 \begin{proof}
     The non-zero element $\eta\in H^1(E,\cO_E)$   corresponding to the extension \eqref{E:E} generates the tangent space $T_0\Jac(E)$. The pullback of $\eta$ by $\pi_\Gamma$
$$
\pi_\Gamma^*(\eta): 0\lra \cO_\Gamma \lra \pi_\Gamma^*\E\lra \cO_\Gamma \lra 0
$$
 identifies    $d\iota_{\pi}(\eta)\in T_0\Jac(\Gamma)=H^1(\Gamma,\cO_\Gamma)$ as the image of the cobounday map:
 $$
 \partial: H^0(\Gamma,\cO_\Gamma) \lra H^1(\Gamma,\cO_\Gamma)
 $$
 We must show that  $d\iota_{\pi}(\eta)$ is tangent to $\mathrm{Ab}_p(\Gamma)$, i.e. that $d\iota_{\pi}(\eta)$ generates $T_0\mathrm{Ab}_p(\Gamma)$. Firstly observe that $d\iota_{\pi}(\eta)\ne 0$, by \ref{enum:etale}.  It is classical and well known (compare\cite{ACGH85}) that $T_0\mathrm{Ab}_p(\Gamma)$ is   the kernel of  the map 
 $$
   H^1(\Gamma,\cO_\Gamma)\lra H^1(\Gamma,\cO_\Gamma(p))
 $$
 induced by multiplication with a non-zero section $\sigma_p\in H^0(\Gamma,\cO_\Gamma(p))$. Observe that on the surface $X$ we have a canonical quotient $\pi^*\E\lra \cO_X(C_0)=\cO_X(1)$ which restricts on $\Gamma$ to a quotient $\zeta:\pi_\Gamma^*\E \lra \cO_\Gamma(p)$. Consider the following diagram:
 $$
 \xymatrix{
 \pi_\Gamma^*(\eta):&0 \ar[r]&\cO_\Gamma\ar[d]^-{\sigma_p}\ar[r]&\pi_\Gamma^*\E \ar[r]\ar[d]\ar[dl]^-{\zeta}&\cO_\Gamma\ar[r]\ar@{=}[d]&0 \\
 &0 \ar[r]&\cO_\Gamma(p)\ar[r]&\E'\ar[r]&\cO_\Gamma\ar[r]&0
 }
 $$
 where the second row is the extension   obtained as the pushout of $\pi^*_\Gamma(\eta)$ under $\sigma_p$.  Taking cohomology and chasing the diagram we see that the existence of $\zeta$ implies  that 
 $$
 \xymatrix{
 d\iota_{\pi}(\eta)\in \ker[ H^1(\Gamma,\cO_\Gamma)\ar[r]^-{\sigma_p}& H^1(\Gamma,\cO_\Gamma(p))]
 }
 $$
 which is precisely what had to be proven.
 \end{proof}

 The map $\iota_\pi$   defines an action of $E$   on $\Jac(\Gamma)$.  Then, for any $d$, the map $\iota_\pi$ induces an action of $E$ on $\Pic^d(\Gamma)$, the variety of isomorphism classes of invertible sheaves of degree $d$, by restricting to $\iota_\pi(E)$ the action of $\Jac(\Gamma)$. We   focus on the case $d=g-1$. For each $\xi\in \Pic^{g-1}(\Gamma)$ we denote by
 $$
 \Orb_\xi: E \lra \Pic^{g-1}(\Gamma), \quad \alpha\mapsto \xi\otimes\iota_\pi(\alpha)=\xi\otimes \pi_\Gamma^*(\alpha)\otimes \pi_\Gamma^*(q)^{-1}
 $$
 Then $\Orb_\xi(E)$ is the \emph{orbit} of $\xi$ by the action of $E$ on $\Pic^{g-1}(\Gamma)$.  The following is an immediate consequence of Proposition \ref{P:tangential}:

 \begin{corollary}\label{C:tangential}
 $\Orb_\xi(E)$ is tangent to $\mathrm{Ab}_p(\Gamma)+\xi$ at $\xi$.    
 \end{corollary}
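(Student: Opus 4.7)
The plan is a direct translation argument, exploiting the fact that the action of $\Jac(\Gamma)$ on $\Pic^{g-1}(\Gamma)$ by tensor product is by isomorphisms of varieties, and hence preserves incidence and tangency relations. Concretely, I would introduce the translation map
\[
t_\xi: \Jac(\Gamma) \lra \Pic^{g-1}(\Gamma), \qquad \eta \mapsto \eta\otimes \xi,
\]
which is an isomorphism of algebraic varieties sending $0$ to $\xi$. Since $t_\xi$ is an isomorphism, it induces an isomorphism on (Zariski) tangent spaces at each point, and therefore carries tangent subvarieties to tangent subvarieties.

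Next I would unpack the definitions to identify the images of the relevant subvarieties under $t_\xi$. From $\Orb_\xi(\alpha)=\xi\otimes\iota_\pi(\alpha)=t_\xi(\iota_\pi(\alpha))$ we see that $t_\xi$ maps $\iota_\pi(E)$ onto $\Orb_\xi(E)$, and directly from the formula $t_\xi$ maps $\mathrm{Ab}_p(\Gamma)$ onto $\mathrm{Ab}_p(\Gamma)+\xi$. Moreover, the common point of $\iota_\pi(E)$ and $\mathrm{Ab}_p(\Gamma)$, namely $0=\iota_\pi(q)=\mathrm{Ab}_p(p)$, is sent by $t_\xi$ to $\xi$, which is the common point of $\Orb_\xi(E)$ and $\mathrm{Ab}_p(\Gamma)+\xi$.

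Finally, Proposition \ref{P:tangential} asserts that $\iota_\pi(E)$ is tangent to $\mathrm{Ab}_p(\Gamma)$ at $0$; applying the isomorphism $t_\xi$ transports this tangency to the tangency of $\Orb_\xi(E)$ with $\mathrm{Ab}_p(\Gamma)+\xi$ at $\xi$, which is the claim. There is no substantive obstacle here: the content of the corollary is entirely contained in Proposition \ref{P:tangential}, and the only thing being added is the formal remark that tangency between subvarieties of an abelian variety (or of a torsor under one) is invariant under translation.
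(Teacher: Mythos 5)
Your argument is correct and is exactly the translation-invariance observation that the paper has in mind when it calls the corollary ``an immediate consequence'' of Proposition \ref{P:tangential}; the paper gives no further proof. You have simply made explicit the step the author leaves implicit, so there is nothing to add or correct.
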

Let
 $$
 \Theta := \{\xi\in \Pic^{g-1}(\Gamma): h^0(\Gamma,\xi) \ge 1\}
 $$
 be the canonical \emph{theta divisor}. The following result is fundamental in the study of TV-curves.

 \begin{prop}[\cite{TV89}]\label{P:Ifinite}
  \begin{enumerate}[label=(\roman*)]
     \item For each $\xi\in \Pic^{g-1}(\Gamma)$ the orbit $\Orb_\xi(E)$ is not contained in $\Theta$, in particular  the pullback $\Orb^*_\xi(\Theta)$ is an effective divisor on $E$.

     \item Let $\xi \in \Theta$ and let $r+1=h^0(\Gamma,\xi)$. Then the intersection multiplicity of $\mathrm{Orb}(E)$ with $\Theta$ at $\xi$ is
     \begin{equation}\label{E:fay}
         \mathrm{mult}_\xi(\mathrm{Orb}(E),\Theta)= r+1+\sum_{i=1}^{r+1}(m_i+n_i)
     \end{equation}
     where the integers $m_1, \dots,m_{r+1},n_1,\dots,n_{r+1}$ are defined by the following identities:
     \begin{align*}
     &h^0(\Gamma,\xi(-m_ip))=&r+1-i+1&=h^0(\Gamma,\omega_\Gamma\xi^{-1}(-n_ip))    \\
     &h^0(\Gamma,\xi(-(m_i+1)p))=&r+1-i&=h^0(\Gamma,\omega_\Gamma\xi^{-1}(-(n_i+1)p))
     \end{align*}
     for all $i=1,\dots,r+1$.

     \item $\Orb^*_\xi(\Theta)$ has degree $g$ for all $\xi \in \Pic^{g-1}(\Gamma)$.
     \item the morphism 
 $$
 I: \Pic^{g-1}(\Gamma) \lra E^{(g)}, \quad \xi\mapsto \Orb^*_\xi(\Theta)
 $$
 is finite (where $E^{(g)}$ denotes the $g$-th \emph{symmetric product} of $E$).
 \end{enumerate}
 \end{prop}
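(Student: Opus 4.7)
The plan is to treat the four parts in order; (i) and (iii) are essentially immediate from general facts, (ii) is Fay's trisecant identity applied to the tangential cover $\pi_\Gamma$ and is the technical heart, and (iv) follows from (iii) together with standard intersection-theoretic considerations.

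For (i), the divisor $\Theta$ is a principal polarization on $\Pic^{g-1}(\Gamma)$, hence ample. By item (\ref{enum:etale}), the map $\Orb_\xi:E\to\Pic^{g-1}(\Gamma)$ is \'etale onto its image, so $\Orb_\xi(E)$ is an irreducible curve in the abelian variety $\Pic^{g-1}(\Gamma)$; any such curve has strictly positive intersection with an ample divisor, so $\Orb_\xi(E)\not\subset\Theta$ and $\Orb^*_\xi(\Theta)$ is a well-defined effective divisor on $E$. For (iii), this degree equals the intersection number $\Orb_\xi(E)\cdot\Theta$, which depends only on the algebraic equivalence class of $\Orb_\xi(E)$ and so is independent of $\xi$. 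I would compute it for a generic $\xi$ via the projection formula:
$$
\alpha\in\Orb^*_\xi(\Theta)\iff h^0\!\big(E,\,F\otimes\cO_E(\alpha-q)\big)>0,\qquad F:=\pi_{\Gamma*}\xi.
$$
A Riemann--Roch computation on $\Gamma$ and $E$ shows that $F$ has rank $g$ and degree $0$. By Atiyah's classification $F$ decomposes into indecomposable summands of the form $F_{r_j,0}\otimes M_j$; by Lemma \ref{L:Eq} each such summand acquires a one-dimensional space of sections for the unique $\alpha$ with $\cO_E(\alpha-q)\otimes M_j\cong\cO_E$, and contributes multiplicity $r_j$ to the divisor. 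Summing gives $\sum r_j=g$.

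The bulk of the proof is (ii), Fay's trisecant identity applied to $\pi_\Gamma$. The starting ingredient is Riemann's singularity theorem: for $\xi\in\Theta$ with $h^0(\xi)=r+1$, the multiplicity of $\Theta$ at $\xi$ is exactly $r+1$, and the projectivized tangent cone $\P(TC_\xi\Theta)\subset\P H^1(\cO_\Gamma)$ admits a canonical determinantal description in terms of $H^0(\xi)$ and $H^0(\omega_\Gamma\otimes\xi^{-1})$. The intersection multiplicity of $\Orb_\xi(E)$ with $\Theta$ at $\xi$ is then the order of vanishing of a local equation of $\Theta$ along $\Orb_\xi(E)$ near $\xi$. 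By Corollary \ref{C:tangential} the tangent direction to $\Orb_\xi(E)$ at $\xi$ coincides with the tangent direction to $\mathrm{Ab}_p(\Gamma)+\xi$ at $\xi$, so the Taylor coefficients of a local theta function along this direction are Wronskian-type determinants in sections of $\xi$ and of $\omega_\Gamma\xi^{-1}$ evaluated at $p$. The vanishing orders of these determinants are governed precisely by the gap integers $m_i$ and $n_i$ in the statement, and assembling all the contributions produces formula \eqref{E:fay}. I expect this Taylor/Wronskian computation, due to Fay and reproduced in \cite{TV89}, to be the main technical obstacle.

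Finally, for (iv), $I$ is proper and both source and target are irreducible projective varieties of dimension $g$; it is therefore enough to show that $I$ has finite fibers. For generic $D=\alpha_1+\cdots+\alpha_g\in E^{(g)}$ the fiber $I^{-1}(D)$ is contained in the scheme-theoretic intersection $\bigcap_{i=1}^g(\Theta-\iota_\pi(\alpha_i))$, which by ampleness of $\Theta$ and the self-intersection formula $\Theta^g=g!$ is zero-dimensional with at most $g!$ points. Hence $I$ is dominant and generically finite. To extend quasi-finiteness to every fiber, I would compute $dI$ using the Fay formula (ii): at a generic $\xi$, each intersection $\xi+\iota_\pi(\alpha_i-q)\in\Theta$ is transverse (case $r=0$, $m_1=n_1=0$), so $dI$ reduces to the intersection of $g$ hyperplanes $T_{\xi+\iota_\pi(\alpha_i-q)}\Theta$ in $H^1(\cO_\Gamma)$; generically these are in general position and $dI$ is an isomorphism. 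The remaining special fibers are then controlled by combining the $E$-equivariance $I(\xi+\iota_\pi(\beta))=T_{-\beta}I(\xi)$ with part (i), which prevents a positive-dimensional fiber from being swept out by $\iota_\pi(E)$-translates inside $\Theta$; the resulting quasi-finiteness together with properness gives that $I$ is finite.
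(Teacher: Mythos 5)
Your treatment of (i) contains a genuine error: from the fact that $\Orb_\xi(E)$ is an irreducible curve and $\Theta$ is ample you conclude $\Orb_\xi(E)\not\subset\Theta$, but positivity of the intersection number does not preclude containment --- a curve $C$ lying inside an ample divisor $D$ still satisfies $C\cdot D=\deg\bigl(\cO(D)|_C\bigr)>0$. Since $\Theta$ does contain curves (for instance translates of the Abel--Jacobi image of $\Gamma$), some special input is indispensable here, and this is exactly where tangentiality enters. The paper invokes Fay's theorem (\cite{jF84}, Th.~1.2): no translate of the one-parameter subgroup $G(\Gamma,p)$ generated by a tangent vector to $\mathrm{Ab}_p(\Gamma)$ at $0$ is contained in $\Theta$; by Proposition \ref{P:tangential} this subgroup coincides with $\iota_\pi(E)$, whence (i). The same identification $G(\Gamma,p)=\iota_\pi(E)$ \emph{as subvarieties} --- not merely the first-order tangency you record in (ii), which by itself would not control higher-order contact --- is what allows one to quote Fay's multiplicity formula verbatim for (ii).

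Part (iv) also has a gap. Your count via $\Theta^g=g!$ is fine for generic finiteness, but the passage to finiteness of \emph{every} fibre is not established: the equivariance $I(\xi\otimes\iota_\pi(\beta))=T_{-\beta}I(\xi)$ relates different fibres to one another and says nothing about a single fibre being swept out by $E$-translates, so part (i) does not ``prevent a positive-dimensional fibre'' in the way you claim. The paper's argument is the actual content of (iv): $I$ is generically quasi-finite, hence surjective; in the Stein factorization $I=\psi\circ\phi$ with $\psi$ finite and $\phi$ birational, a positive-dimensional fibre of $\phi$ would contain a rational curve, which is impossible inside the abelian variety $\Pic^{g-1}(\Gamma)$. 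Finally, for (iii) your route through the Atiyah decomposition of $\pi_{\Gamma*}\xi$ for generic $\xi$ is workable but leaves two steps unjustified: that all indecomposable summands have degree zero (this itself requires (i), since a positive-degree summand would force the orbit into $\Theta$), and that a summand $F_{r_j}\otimes M_j$ contributes its full rank $r_j$ to the multiplicity of the pulled-back divisor. The paper instead evaluates at the single element $\xi=(g-1)p$, where $\pi_{\Gamma*}\xi\cong S^{g-1}\E$ has one-point theta support, and reads the multiplicity $g$ off formula \eqref{E:fay}.
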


\bigskip

\begin{proof}
    (i) Let $T=T_0(\mathrm{Ab}_p(\Gamma))$,   the tangent space at $0\in \Jac(\Gamma)$ of the Abel image of $\Gamma$. Let $G(\Gamma,p)\subset \Jac(\Gamma)$ be the 1-parameter subgroup generated by any  $0\ne v\in T$.  By \cite{jF84}, Th. 1.2, the orbit $\xi\cdot G(\Gamma,p)$ is not contained in $\Theta$, for any $\xi\in \Pic^{g-1}(\Gamma)$. Since $\pi_\Gamma$ is tangential, we have $T=T_0(\iota_\pi(E))$ and therefore $G(\Gamma,p)=\iota_\pi(E)$. Thus $\Orb_\xi(E)=\xi\cdot G(\Gamma,p)$ is not contained in $\Theta$ for all $\xi\in \Pic^{g-1}(\Gamma)$.

(ii) Since $G(\Gamma,p)=\iota_\pi(E)$ the multiplicity $\mathrm{mult}_\xi(\mathrm{Orb}(E),\Theta)$ coincides with the number $N$ appearing in the statement of  \cite{jF84}, Th. 1.2, i.e. with the right hand side of  \eqref{E:fay}.

(iii)   $\deg(\mathrm{Orb}^*_\xi(\Theta))$ does not depend on $\xi$; therefore it suffices to prove (iii) for just one choice of $\xi$: we take $\xi=(g-1)p$.
 Let's write $\epsilon=\cO_E(\alpha-q)\in \Jac(E)$. Since $\xi\otimes \iota_\pi(\alpha)=\xi\otimes \pi_\Gamma^*\epsilon$,   we have $\alpha\in \mathrm{Supp}\left(\mathrm{Orb}_\xi^*(\Theta)\right)$ if and only if 
\begin{equation}\label{E:piomega2}
H^0(E,\pi_{\Gamma*}\xi(\epsilon))=H^0(\Gamma,\xi\otimes\pi_\Gamma^*\epsilon) > 0
 \end{equation}
Consider the exact sequence on $X$:
$$
0 \lra \cO_X(-E_0-f)\lra \cO_X((g-1)E_0)\lra \cO_\Gamma((g-1)p)\lra 0
$$
taking direct images we get 
$$
\pi_{\Gamma*}\xi\cong\pi_*\cO_X((g-1)E_0)=S^{g-1}\E
$$
Since $$
H^0(E,S^{g-1}\E(\epsilon))=\begin{cases}1, \quad \epsilon=0\\0, \quad \epsilon \ne 0
    \end{cases}
$$ 
 we see that $h^0(\Gamma,\xi)=1$ and $\mathrm{Supp}\left(\mathrm{Orb}_\xi^*(\Theta)\right)=\{q\}$. We now apply  \eqref{E:fay}. The integers appearing in 
 \eqref{E:fay} are $r=0$ and $m_1=g-1$, $n_1=0$, because $p\notin \mathrm{Supp}(\omega_\Gamma \xi^{-1})$, since $\omega_\Gamma \xi^{-1}=f\cdot \Gamma-p$. Therefore we obtain
 $$
 \mathrm{mult}_\xi(\mathrm{Orb}_\xi(E),\Theta)=1+(g-1+0)=g
 $$
 and this proves that $\mathrm{Orb}^*_\xi(\Theta)=gq$.

\medskip
(iv) Consider $gq\in E^{(g)}$. If $\xi\in I^{-1}(gq)$ then 
 $\mathrm{Orb}_\xi(E)\cap \Theta = \{\xi\}$ and $\mathrm{mult}_\xi(\mathrm{Orb}_\xi(E),\Theta)=g$. Then $\xi=(g-1)p$. In particular $I^{-1}(gq)$ is finite, thus $I$ is generically quasi-finite, and therefore surjective.  Consider the Stein factorization of $I$:
 $$
 \xymatrix{
\Pic^{g-1}(\Gamma)\ar[dr]_-\phi \ar[rr]^-I&& E^{(g)} \\
&Z\ar[ur]_-\psi
 }
 $$
 where $\psi$ is finite and $\phi$ is birational with connected fibres. If $I$ is not finite then $\phi$ has a positive dimensional fibre $F\subset \Pic^{g-1}(\Gamma)$. This implies that $F$ contains a rational curve, and this is impossible, because $\Pic^{g-1}(\Gamma)$ is an abelian variety.
 \end{proof}


\section{Brill-Noether generality}\label{S:brillnoether}

 Let $C$ be a projective nonsingular irreducible curve of genus $g\ge 3$.   Given integers $r,d \ge 0$  there is a well defined closed subscheme    $W^r_d(C)\subset \Pic^d(C)$ defined set-theoretically by:
 $$
 W^r_d(C)=\{L\in \Pic^d(C): h^0(C,L) \ge r+1\}
 $$
 (see \cite{ACGH85} for details). Let
 $$
 \rho(g,r,d) = g - (r+1)(g-d+r)
 $$
 be the \emph{Brill-Noether number}. It is a classical result (\cite{K71,KL72}) that if $\rho(g,r,d) \ge 0$ then $W^r_d(C)\ne \emptyset$ and $\dim(W^r_d(C)) \ge \rho(g,r,d)$ on every curve of genus $g$. Recall the following 

 \begin{definition}
     The curve $C$ is \emph{Brill-Noether general} if for every $r,d$ as above we have:
     
     $\bullet$ $\dim(W^r_d(C)) = \rho(g,r,d)$ if $\rho(g,r,d)\ge 0$

     $\bullet$ $W^r_d(C)=\emptyset$  if $\rho(g,r,d)< 0$.
 \end{definition}
 From  Griffiths-Harris \cite{GH80} we know that Brill-Noether general curves of any genus exist.    Nevertheless it is notoriously difficult to produce concrete examples of such curves. 
 By a result of Lazarsfeld \cite{rL86} we know that Brill-Noether general curves can be found in certain linear systems on K3 surfaces. More explicit Brill-Noether general curves of any genus, the so-called \emph{Du Val curves},  have been constructed   in \cite{ABFS16}. It turns out that TV-curves  also enjoy such property.
 In fact we have the following:

 \begin{theorem}[\cite{aT93b}]\label{T:treibich}
     All TV-curves are Brill-Noether general.
 \end{theorem}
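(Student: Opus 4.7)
The plan is to apply Fay's formula (Proposition \ref{P:Ifinite}(ii)) together with the finiteness of the map $I \colon \Pic^{g-1}(\Gamma) \to E^{(g)}$ (Proposition \ref{P:Ifinite}(iv)) to control $W^r_{g-1}(\Gamma)$ directly, and then to reduce arbitrary $(r,d)$ to this case using Serre duality together with a secondary dimension comparison.

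For the core case $d = g-1$: take $\xi \in W^r_{g-1}(\Gamma)$ and set $s+1 := h^0(\Gamma,\xi) \ge r+1$. Fay's formula reads
$$
\mathrm{mult}_\xi(\Orb_\xi(E), \Theta) = (s+1) + \sum_{i=1}^{s+1}(m_i+n_i).
$$
The defining identities in Proposition \ref{P:Ifinite}(ii) force $0 \le m_1 < m_2 < \cdots < m_{s+1}$ and similarly for the $n_i$, so $m_i, n_i \ge i-1$ and $\sum m_i, \sum n_i \ge s(s+1)/2$; hence
$$
\mathrm{mult}_\xi(\Orb_\xi(E), \Theta) \ge (s+1)^2 \ge (r+1)^2.
$$
Since $\iota_\pi(q) = \cO_\Gamma$ we have $\Orb_\xi(q) = \xi$, and so $I(\xi) = \Orb_\xi^*(\Theta)$ is an effective degree-$g$ divisor on $E$ containing $(r+1)^2 q$. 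When $\rho(g,r,g-1) = g - (r+1)^2 < 0$ this is impossible, forcing $W^r_{g-1}(\Gamma) = \emptyset$. When $\rho(g,r,g-1) \ge 0$, the locus of such divisors in $E^{(g)}$ has dimension $g - (r+1)^2 = \rho(g,r,g-1)$, and the finiteness of $I$ yields $\dim W^r_{g-1}(\Gamma) \le \rho(g,r,g-1)$; combined with the classical Kempf--Kleiman--Laksov lower bound this gives equality.

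For the remaining cases: Serre duality $L \mapsto \omega_\Gamma \otimes L^{-1}$ identifies $W^r_d(\Gamma)$ with $W^{g-d+r-1}_{2g-2-d}(\Gamma)$ and preserves $\rho$, so it suffices to treat $d \le g-1$. The main obstacle is the case $d < g-1$ with $r \ge 1$: the addition map $W^r_d(\Gamma) \times \Gamma^{(g-1-d)} \to W^r_{g-1}(\Gamma)$, $(L,D) \mapsto L(D)$, is generically finite (over a generic $L' = L(D)$ in its image, $D$ lies in $\mathrm{Bs}(|L'|)$ and is essentially unique), so it produces only $\dim W^r_d(\Gamma) \le \rho(g,r,g-1) - (g-1-d) = \rho(g,r,d) + r(g-1-d)$, which is not sharp. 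Closing this gap requires rerunning the tangentiality-and-Fay machinery directly inside $\Pic^d(\Gamma)$: the action of $E$ via $\iota_\pi$ still operates, and one identifies a suitable substitute for $\Theta$ (a translate determined by a fixed effective divisor of degree $g-1-d$) together with a refined Fay-type intersection-multiplicity formula giving the sharp bound $(r+1)(g-d+r)$ at each point of $W^r_d(\Gamma)$. This sharpening in arbitrary degree $d$ is the delicate technical heart of Treibich's argument.
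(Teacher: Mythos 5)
Your treatment of the case $d=g-1$ is correct and matches the paper's mechanism: Fay's formula plus the strict monotonicity $m_i,n_i\ge i-1$ gives $\mathrm{mult}_\xi(\Orb_\xi(E),\Theta)\ge (r+1)^2$, and since $\xi=\Orb_\xi(q)$ this multiplicity is absorbed at the single point $q$ of the degree-$g$ divisor $I(\xi)$, yielding both the emptiness statement and, via finiteness of $I$, the dimension bound. But for $d<g-1$ you have only diagnosed the problem, not solved it: you correctly observe that the addition map $W^r_d(\Gamma)\times\Gamma^{(g-1-d)}\to W^r_{g-1}(\Gamma)$ loses $r(g-1-d)$ in the dimension count, and then you defer to an unspecified ``refined Fay-type formula inside $\Pic^d(\Gamma)$.'' No such refinement is needed, and leaving it as a black box is a genuine gap, since for $d<g-1$ and $r\ge 1$ this is exactly where the theorem has content.

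The missing idea is to add the divisor $(g-1-d)p$ \emph{supported at the marked point $p$} rather than a generic effective divisor: set $\xi=\L((g-1-d)p)$ for $\L\in W^r_d(\Gamma)$ with $h^0(\Gamma,\L)=r+1$, and write $h^0(\Gamma,\xi)=r+1+s$. Because $h^0(\Gamma,\xi(-(g-1-d)p))=h^0(\Gamma,\L)=r+1$, all the drops of $h^0(\Gamma,\xi(-mp))$ below $r+1$ occur at $m\ge g-1-d$, so the vanishing orders satisfy $m_{s+k}\ge (g-1-d)+k-1$ for $k=1,\dots,r+1$, while $m_j\ge j-1$ for $j\le s$ and $n_i\ge i-1$ for all $i$. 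Feeding these into the \emph{standard} formula \eqref{E:fay} (applied in $\Pic^{g-1}(\Gamma)$, where Proposition \ref{P:Ifinite} lives) gives
$$
\mathrm{mult}_\xi(\Orb_\xi(E),\Theta)\ \ge\ (r+1+s)+\tfrac{1}{2}s(s-1)+(r+1)(g-1-d)+\tfrac{1}{2}r(r+1)+\tfrac{1}{2}(r+s)(r+s+1)\ \ge\ (r+1)(g-d+r),
$$
the sharp bound $g-\rho(g,r,d)$. Since $\L\mapsto\L((g-1-d)p)$ embeds $W^r_d(\Gamma)$ into $\Pic^{g-1}(\Gamma)$ preserving dimension, one gets $I\bigl((g-1-d)p+W^r_d(\Gamma)\bigr)\subset (g-\rho)q+E^{(\rho)}$, and finiteness of $I$ concludes as in your degree-$(g-1)$ case. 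In short: the ``delicate technical heart'' you postpone is a one-line choice of twist, precisely because the Fay multiplicities are computed at the same point $p$ at which you concentrate the added divisor, so the apparent loss in the addition map is converted into a gain in the multiplicity estimate.
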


\begin{proof}
      We must show that 
$W^r_d(\Gamma)=\emptyset$  if $\rho:=\rho(g,r,d)<0$ and that $\dim(W^r_d(\Gamma))=\rho$ if $\rho\ge 0$.  Let $\L\in W^r_d(\Gamma)$ be such that $h^0(\Gamma,\L)=r+1$. Since 
$$
\rho(g,r,d)=\rho(g,g-d+r-1,2g-2-d)
$$ 
we may assume that $d \le g-1$. Let 
 $$
 \xi = \L((g-1-d)p)\in \Pic^{g-1}(\Gamma)
 $$
 We will apply formula \eqref{E:fay} to $\xi$. We have:
 $$
 h^0(\Gamma,\xi) = r+1+s
 $$
 for some $s \ge 0$. Then we have  the following obvious lower estimates:
      \begin{itemize}
          \item[(a)] $m_j\ge j-1$ if $j=1,\dots, s$,
          \item[(b)] $m_{s+k}\ge (g-1-d)+k-1$ if $k=1,\dots, r+1$.
          \item[(c)] $n_i\ge i-1$ for every $i=1, \dots, r+1+s$.
      \end{itemize}
    Substituting in \eqref{E:fay} we get:  
    
    \begin{align*}
     \mathrm{mult}_\xi(\Orb_
\xi(E), \Theta)\ge 
     &  (r+1+s)+\frac{1}{2}s(s-1)+(r+1)(g-1-d)+\\
     &+\frac{1}{2}r(r+1)+\frac{1}{2}(r+s)(r+s+1)\ge\\
     &\ge (r+1)(g-d+r)
    \end{align*}
    We cannot have $(r+1)(g-d+r)> g$ because otherwise $\Orb_\xi(E)\subset \Theta$, contradicting Proposition \ref{P:Ifinite}(i). Therefore  $\rho(g,r,d) \ge 0$. Moreover, since $$
    \mathrm{mult}_\xi(\Orb_
\xi(E), \Theta)\ge (r+1)(g-d+r) = g-\rho
$$ 
we have 
$$
I((g-1-d)p+W^r_d(\Gamma))\subset (g-\rho)q +E^{(\rho)}
$$
  From Proposition \ref{P:Ifinite}(iv)  we deduce that $\dim(W^r_d(\Gamma))\le \dim(E^{(\rho)})=\rho(g,r,d)$. But we also have $\dim(W^r_d(\Gamma))\ge \rho(g,r,d)$, and therefore we must have equality.
  \end{proof}

  \begin{remark}\label{R:FT1}\rm
    In \cite{FT17}, Theorem 4, the authors       prove  that a general TV-curve satisfies the 1-pointed Brill-Noether theorem, a slightly stronger notion than Brill-Noether generality. Their proof is by degeneration, thus completely different from Treibich's proof of Theorem \ref{T:treibich}.     
  \end{remark}

  \begin{remark}\label{R:TV1}\rm
       It is interesting to observe the analogy between the series of   TV-curves of genus $g$ on $X$ and the series of Du Val curves of genus $g$ on the blow-up $S$ of $\bP^2$ at nine general points \cite{ABFS16}: for a given $g$ the curves of both classes     are Brill-Noether general  and move in a $g$-dimensional linear system with one base point.  In fact Du Val curves are even Petri-general. It is not known whether TV-curves are Petri general, and the proof of Theorem \ref{T:treibich} does not seem to generalize to prove such stronger property.
  \end{remark}


  \section{Embedding $X$ birationally  in $\bP^g$ with canonical TV-curves of genus $g$ as hyperplane sections}\label{S:embedding}

We now fix $g \ge 3$ and we let $\Gamma_g\in |gE_0+f|$ be a general TV-curve of genus $g$. We  blow-up $X$ at the base point $p$ of $|gE_0+f|$.  We obtain
$$
\sigma: X' \lra X
$$
with $\sigma^{-1}(p)=: e'$. We let 
$$
E':=\sigma^*(E_0)-e', \quad   f':=\sigma^*(f)-e',   \quad \Gamma_g':=\sigma^*(\Gamma_g)-e'\sim gE'+f'+ge'
$$
   be the proper transforms of $E_0, f$ and  $\Gamma_g$ respectively.
The intersection numbers are:
$$\begin{aligned}
   E'^2=e'^2= f'^2 =-1,& \quad E'\cdot e'=e'\cdot f'=1\\
   E'\cdot f'=0, &\quad 
     \Gamma_g'\cdot E'=0,  & \Gamma_g'^2=2g-1
\end{aligned}
$$
Moreover $\Gamma_g'\cdot f'=0$ because $\Gamma_g$ is not tangent to $f$ (see Remark \ref{R:tangency}).

\noindent
We also have $K_{X'}=-2E'-e'$. Moreover $\cO_{\Gamma_g'}(\Gamma_g')=\omega_{\Gamma_g'}(p'(g))$, where $p'(g):=e'\cap \Gamma_g'$ is the point corresponding to the common tangent line at $p$ of all the curves of $|\Gamma_g|$  (Remark \ref{R:tangency}). Therefore the linear system $|\Gamma_g'|$ has a base point at $p'(g)$.

Let $\nu:Y_g\lra X'$ be the blow-up at $p'(g)$, $e=\nu^{-1}(p'(g))$ the exceptional curve, and
$$
\ol E =\nu^* E', \quad \ol e=\nu^*e'-e, \quad \ol\Gamma_g=\nu^*\Gamma_g'-e.
$$
Then $K_{Y_g}=-2\ol E-\ol e$, \ $\ol\Gamma_g\cdot K_{Y_g}=0$, \ $\cO_{\ol\Gamma_g}(\ol\Gamma_g)=\omega_{\ol\Gamma_g}$.

\begin{prop}\label{P:gorenstein}
\begin{itemize}
    \item[(i)] The linear system $|\ol\Gamma_g|$ is base point free and has dimension 
$$
\dim(|\ol\Gamma_g|)= h^0(Y,\cO_{Y_g}(\ol\Gamma_g))-1=g
$$
\item[(ii)]
    Let $S_g:=\varphi_{\ol\Gamma_g}(Y_g)\subset \bP^g$ be the image of $Y_g$ by the linear system $|\ol\Gamma_g|$. Then $S_g$ is a normal surface whose hyperplane sections are  canonically embedded TV-curves of genus $g$. $S_g$ has a unique Gorenstein singular point of geometric genus two and $\omega_{S_g}=\cO_{S_g}$.
    \end{itemize}
\end{prop}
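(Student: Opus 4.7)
The plan is as follows. For part (i), I would compare $H^0(Y_g,\cO_{Y_g}(\ol\Gamma_g))$ with $H^0(X,\cO_X(\Gamma_g))$ via push-forward along $\sigma\circ\nu$: a section of $\cO_{Y_g}(\ol\Gamma_g)$ corresponds to a section of $\cO_X(\Gamma_g)$ whose zero divisor passes through $p$ with the prescribed common tangent direction. Both conditions are automatic by Lemma \ref{L:Gamma}(iii) and Remark \ref{R:tangency}, so
$$
h^0(Y_g,\cO_{Y_g}(\ol\Gamma_g))=h^0(X,\cO_X(\Gamma_g))=g+1,
$$
giving $\dim|\ol\Gamma_g|=g$. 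Base-point-freeness reduces to the exceptional divisors $\ol e$ and $e$: on $\ol e$ we have $\ol\Gamma_g\cdot\ol e=0$ and $\cO_{\ol e}(\ol\Gamma_g)$ is trivial, so no base points arise there; on $e\simeq\bP^1$ we have $\ol\Gamma_g\cdot e=1$, and I would check that the single intersection point varies with $\Gamma_g$ by a transversality argument on the $2$-jets of curves in $|\Gamma_g|$ at $p$.

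For part (ii), I would examine the morphism $\varphi_{|\ol\Gamma_g|}:Y_g\to S_g\subset\bP^g$ through its contracted curves (those irreducible $C$ with $C\cdot\ol\Gamma_g=0$): intersection computations single out exactly $\ol E$ and $\ol e$. These meet transversally once, form a connected reduced divisor, and have intersection matrix $\left(\begin{smallmatrix}-1 & 1\\ 1 & -2\end{smallmatrix}\right)$ of determinant $1$ and trace $-3$, which is negative definite, so Grauert's criterion collapses $\ol E+\ol e$ to a single normal point $P\in S_g$. Away from $\ol E\cup\ol e$ I expect $\varphi$ to be an embedding, to be verified via the cohomology sequence of
$$
0\lra \cO_{Y_g}\lra \cO_{Y_g}(\ol\Gamma_g)\lra \omega_{\ol\Gamma_g}\lra 0;
$$
the dimension count from part (i) forces surjectivity of $H^0(Y_g,\cO_{Y_g}(\ol\Gamma_g))\to H^0(\omega_{\ol\Gamma_g})$, so the hyperplane sections of $S_g$ are canonically embedded smooth TV-curves $\ol\Gamma_g\simeq\Gamma_g$.

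Triviality of $\omega_{S_g}$ follows from the fact that $K_{Y_g}=-2\ol E-\ol e$ is supported on the contracted exceptional divisor; Grothendieck duality then yields $\omega_{S_g}\simeq\varphi_*\omega_{Y_g}\simeq\cO_{S_g}$, so $P$ is Gorenstein. For the geometric genus $p_g(P)=\dim(R^1\varphi_*\cO_{Y_g})_P=2$, I would run the Leray spectral sequence for $\varphi$, using $h^\bullet(Y_g,\cO_{Y_g})=(1,1,0)$ inherited from $X$, Serre duality on $S_g$ to get $h^2(\cO_{S_g})=1$, and an independent vanishing $h^1(\cO_{S_g})=0$ (e.g.\ via a Koszul computation on the hyperplane section $\Gamma_g\subset S_g$); the five-term sequence then forces $p_g=2$. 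The most delicate step will be this last one: both the embedding property off $\ol E\cup\ol e$ and the vanishing $h^1(\cO_{S_g})=0$ express the ``K3-like'' nature of $S_g$ and require care beyond formal cohomology bookkeeping.
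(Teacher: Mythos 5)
Your outline follows the same architecture as the paper's proof (push $h^0$ down to $X$ to get $g+1$; contract $\ol E+\ol e$; compute $p_g(z_g)$ from the five-term Leray sequence using $h^1(\cO_{S_g})=0$, $h^2(\cO_{S_g})=1$, $h^1(\cO_{Y_g})=1$, $h^2(\cO_{Y_g})=0$), but two steps as written are genuinely defective. First, base-point-freeness along $e$ is precisely the assertion that the curves of $|\Gamma_g|$ do not all osculate to second order at $p$ in the same direction; this is the only nontrivial content of (i), and ``I would check\dots by a transversality argument on the $2$-jets'' is not an argument. You in fact already hold the tool that settles it: the surjectivity of $H^0(Y_g,\cO_{Y_g}(\ol\Gamma_g))\to H^0(\omega_{\ol\Gamma_g})$, which you invoke only in part (ii), is exactly the paper's route --- since $|\omega_{\ol\Gamma_g}|$ is base point free and $\mathrm{Bs}|\ol\Gamma_g|$ is contained in the member $\ol\Gamma_g$, it disposes of all base points (including the one on $e$) with no jet computation.

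Second, the isomorphism $\omega_{S_g}\simeq\varphi_*\omega_{Y_g}$ is false here, precisely because $z_g$ is an irrational singularity: for a resolution of a normal surface one only has $\varphi_*\omega_{Y_g}\hookrightarrow\omega_{S_g}$, with cokernel of length $p_g(z_g)=2$ at $z_g$. Likewise $\varphi_*\cO_{Y_g}(-2\ol E-\ol e)$ is a proper ideal sheaf of $\cO_{S_g}$ cosupported at $z_g$ (the constant $1$ does not vanish along the exceptional divisor), not $\cO_{S_g}$. The conclusion is correct, but the argument must pass to reflexive hulls --- $\omega_{S_g}\cong(\varphi_*\omega_{Y_g})^{\vee\vee}$, and the reflexive hull of that ideal sheaf is $\cO_{S_g}$ --- or cite Umezu as the paper does; note the flaw propagates, since your $h^2(\cO_{S_g})=1$ rests on Serre duality with $\omega_{S_g}\cong\cO_{S_g}$. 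Two further points are asserted rather than proved: that $\ol E$ and $\ol e$ are the \emph{only} contracted curves (an intersection computation shows they are contracted; excluding others requires an argument about curves orthogonal to the big and nef class $\ol\Gamma_g$, or Umezu's Theorem 1 as in the paper), and $h^1(\cO_{S_g})=0$, which the paper obtains from $S_g$ being arithmetically Gorenstein with canonical curve sections. Finally, ``canonically embedded'' hyperplane sections implicitly uses that TV-curves of genus $g\ge 3$ are non-hyperelliptic, i.e.\ Theorem \ref{T:treibich}.
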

\begin{proof}
(i) Since $\ol\Gamma_g=\nu^*(\Gamma_g')-e$,
we have, by Leray spectral sequence:
$$
h^0(Y_g,\cO_{Y_g}(\ol\Gamma_g))=h^0(X',\I_{p'(g)}(\Gamma_g'))=h^0(X',\cO_{X'}(\Gamma_g'))
$$
Similarly
$$
h^0(X',\cO_{X'}(\Gamma_g'))=h^0(X,\cO_X(\Gamma_g))=g+1
 $$
 By the Riemann-Roch theorem on $Y_g$ we get $\chi(\cO_{Y_g}(\ol\Gamma_g))=g-1$ and therefore $h^1(Y_g,\cO_{Y_g}(\ol\Gamma_g))=2$.
 From the exact sequence:
 $$
 0 \lra \cO_{Y_g} \lra \cO_{Y_g}(\ol\Gamma_g)\lra \omega_{\ol\Gamma_g}\lra 0
 $$
    we see that the characteristic linear system of $|\ol\Gamma_g|$ is the complete canonical system on $\ol\Gamma_g$. This     proves (i).
    
(ii)  An easy computation based on the  exact sequences:
$$
\xymatrix{&0\ar[d]\\
&\cO_{Y_g}(\ol E) \ar[d] \\
0\ar[r]&\cO_{Y_g}(\ol E+\ol e) \ar[d]\ar[r]&\cO_{Y_g}(-K_{Y_g}) \ar[r] & \cO_{\ol E}(-K_{Y_g})\lra 0 \\
&\cO_{\ol e}(\ol E+\ol e)\ar[d]\\
&0
}
$$
  shows that $h^0(Y_g,\cO_{Y_g}(-K_{Y_g}))=1$.
 Let $2\ol E+\ol e=\ol E+J$ be the unique element of $|-K_{Y_g}|$.
 Since $\ol\Gamma_g\cdot J=0$ and $J$ is connected, the linear system $|\ol\Gamma_g|$ contract $J$ to a normal singular point $z_g\in S_g$ and $S_g$ is a  surface with canonical curve sections. Therefore $S_g$ is arithmetically Gorenstein, hence normal. It is evident, by construction, that the members of $|\ol\Gamma_g|$  are TV-curves. 
 According to \cite{yU81}, Theorem 2,   $\omega_{S_g}=\cO_{S_g}$.   
 Consider the exact sequence coming from the Leray spectral sequence:
  $$
  H^1(S_g,\cO_{S_g})\ra H^1(Y_g,\cO_{Y_g})\ra H^0(S_g,R^1\varphi_*\cO_{Y_g})\ra H^2(S_g,\cO_{S_g})\ra H^2(Y_g,\cO_{Y_g})
  $$
Since $H^1(S_g,\cO_{S_g})=0$  and $H^2(Y_g,\cO_{Y_g})=H^0(Y_g,K_{Y_g})=0$ we deduce that:
$$
h^0(S_g,R^1\varphi_*\cO_{Y_g}) = h^1(Y_g,\cO_{Y_g})+h^2(S_g,\cO_{S_g}) =2
$$
 and therefore $z_g$ has geometric genus $p_g(z_g)=2$. According to \cite{yU81}, Theorem 1, $z_g$ is the unique singular point of $S_g$.
\end{proof}
 
   Of course  the surface $S_g\subset \bP^g$ described above  appears explicitely in the classification of normal surfaces with  canonical hyperplane sections, given in \cite{dE83} and in \cite{yU81}.    Next we are interested in deciding whether $S_g$ is a limit of K3 surfaces, i.e if it is smoothable in $\bP^g$, a question   considered in \cite{ABS17}. Our result is the following

   \begin{theorem}\label{T:K3limit}
       For a given $g\ge 3$ let $S_g\subset \bP^g$ be the surface constructed in Proposition \ref{P:gorenstein}.  Then $S_g$ is smoothable in $\bP^g$. Precisely there is a pointed affine nonsingular curve $(\Delta,0)$ and a flat family of projective surfaces
       $f:\cS_g \lra \Delta$  with  $\cS_g\subset \Delta\times \bP^g$ and $f$ induced by the projection, such that $\cS_g(0)=S_g$ and $\cS_g(t)\subset \bP^g$ is a nonsingular K3 surface of degree $2g-2$. Consequently every nonsingular hyperplane section of $S_g$ is  a  limit of hyperplane sections of K3 surfaces. 
   \end{theorem}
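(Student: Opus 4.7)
The plan is to produce the required family $\cS_g \to \Delta$ as an embedded deformation of $S_g \subset \bP^g$ smoothing the unique Gorenstein singularity $z_g$. Once such a family is constructed, the K3 property of the general fibre is automatic: the Hilbert polynomial of $S_g$ is $P(m) = (g-1)m^2 + 2$, matching that of a K3 surface of degree $2g-2$ in $\bP^g$. By flatness and upper semicontinuity $h^1(\cO_{\cS_g(t)}) = 0$ and $h^0(\omega_{\cS_g(t)}) \ge 1$ persist near $t = 0$, and since $\omega \cdot H = 0$ with $H$ ample this forces $\omega_{\cS_g(t)} = \cO_{\cS_g(t)}$; together with smoothness of the generic fibre this gives a polarised K3 of degree $2g-2$.

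The first technical step is local: the singularity $z_g$ has geometric genus two, and one needs $T^1_{z_g}$ to contain a smoothing direction. I would analyse $z_g$ through the resolution $Y_g \to S_g$, whose exceptional divisor is the anticanonical cycle $J = 2\ol E + \ol e$ (a smooth elliptic curve $\ol E$ meeting a smooth rational curve $\ol e$), and either invoke smoothability results for Gorenstein surface singularities of small geometric genus --- in the spirit of Laufer--Wahl for $p_g=1$ extended to $p_g=2$ by Karras and Stevens --- or construct an analytic smoothing directly by deforming a suitable local equation. This yields a smoothing component in $T^1_{z_g}$ of positive dimension.

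The second and principal step is to lift this local smoothing to an embedded deformation of $S_g \subset \bP^g$. Via the local-to-global spectral sequence for the normal sheaf this amounts to (a) surjectivity of the restriction map $H^0(S_g, N_{S_g/\bP^g}) \to T^1_{z_g}$ onto a smoothing vector, and (b) vanishing of the obstruction class in $H^1(S_g, N_{S_g/\bP^g})$. Both should be accessible by pulling back to the resolution $Y_g$, where $N_{S_g/\bP^g}$ is described in terms of the divisors $\ol E, \ol e, \ol\Gamma_g$, and where Grauert--Riemenschneider vanishing combined with the cohomology computations of Lemma \ref{L:Gamma} and the projective normality of canonical TV-curves should provide the required bounds. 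Once both conditions hold, standard Hilbert-scheme deformation theory produces the one-parameter family $\cS_g \to \Delta$ of the statement.

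The main obstacle will be the global obstruction vanishing in (b): proving directly that $H^1(S_g, N_{S_g/\bP^g}) = 0$, or at least that the specific smoothing class lies in the kernel of the obstruction map, requires a careful cohomological analysis on $Y_g$. The key positive input is $\ol\Gamma_g \cdot K_{Y_g} = 0$ together with $\cO_{\ol\Gamma_g}(\ol\Gamma_g) = \omega_{\ol\Gamma_g}$, which should make the relevant cohomology groups computable via adjunction and restriction to $\ol\Gamma_g$; but bookkeeping of the contributions from the two exceptional divisors $\ol E$ and $\ol e$, and controlling the interaction between equisingular and smoothing deformations at $z_g$, is the technically delicate part of the argument.
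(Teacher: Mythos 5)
Your outline has the right two-step shape (smooth the singularity $z_g$ locally, then globalize to an embedded deformation of $S_g\subset\bP^g$), but both steps are left essentially open, and the first cannot be closed the way you suggest. There is no general smoothability theorem for normal Gorenstein surface singularities of geometric genus two of the kind you invoke; ``Laufer--Wahl extended to $p_g=2$'' is not an available black box, and smoothability of such singularities is genuinely delicate. The paper's local step is instead a concrete geometric trick: let the point $p'$ vary along $e'\setminus\{E'\cap e',\,f'\cap e'\}$, blow up at $p'$, and contract the negative-definite configuration $\ol E+\ol e$ on $Y_{p'}$ by Grauert's criterion. This produces a flat family of singularities $z_{p'}$ which, at $p'=p'(3)$, specializes to the singular point of the quartic surface $S_3\subset\bP^3$ --- a hypersurface double point, hence smoothable and unobstructed --- and this is what propagates smoothability to $z_g$ for every $g\ge 3$. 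Without some such argument your local step is an assertion, not a proof.

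For the global step you correctly locate the difficulty (surjectivity of $H^0(S_g,N_{S_g/\bP^g})\to T^1_{z_g}$ onto a smoothing direction and vanishing of the obstruction), but you explicitly leave it unresolved. The paper does not carry out this cohomological analysis by hand either: it invokes \cite{ABS17}, Theorem 10.3, which is tailored to exactly this situation (normal surfaces in $\bP^g$ with canonical curve sections and smoothable singularities), the only hypothesis to be verified being $H^2(S_g,T_{S_g})=0$, checked as in \cite{ABS17}, Lemma 10.1. If you want a self-contained argument along your lines you would have to actually perform the obstruction computation you flag as ``technically delicate,'' which is precisely the content of that cited theorem. Your identification of the general fibre as a K3 surface of degree $2g-2$ via the Hilbert polynomial and semicontinuity is fine; one should also record, as the paper does, that since $\dim|\cO_{S_g}(1)|=\dim|\cO_{\cS_g(t)}(1)|=g$, every curve of $|\cO_{S_g}(1)|$ extends to the general fibre, which is what makes every nonsingular hyperplane section of $S_g$ a limit of hyperplane sections of K3 surfaces.
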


   \begin{proof}
On the surface  $X'$, the blow-up of $X$ at $p$, consider any point $p'\in e'\setminus \{E'\cap e', f'\cap e'\}$, and let $\nu:Y_{p'}\lra X'$ be the blow-up of $X'$ at $p'$. Moreover let $e\subset Y_{p'}$ be the exceptional curve, $\ol E=\nu^*E'$,\ \  $\ol e =\nu^*e'-e$. The intersection matrix
$$
\begin{pmatrix}\ol E^2&\ol E\cdot \ol e \\ \ol e\cdot \ol E&\ol e^2
\end{pmatrix} = \begin{pmatrix}
    -1&1\\1&-2
\end{pmatrix}
$$
   is negative definite. Therefore the curve $\ol E+ \ol e$ can be contracted in $Y_{p'}$ \cite{hG62}.    The resulting complex analytic surface $S_{p'}$ has an isolated singularity $z_{p'}$. Thus we obtain a flat family of surface singularities 
   $$
   \left\{z_{p'}: p'\in e'\setminus \{E'\cap e', f'\cap e'\}\right\}
   $$
    If $p'=p'(g)$  we get $z_{p'}=z_g$. In particular, if $p'=p'(3)$ then $S_{p'}=S_3$ is a quartic surface in $\bP^3$. Then $z_{p'}=z_3$ is a hypersurface singularity, hence it is smoothable and unobstructed.\footnote{$z_3$ is a double point, and appears as case (i) in Proposition 2 of \cite{yU84}. } 
    It follows that   all the $z_{p'}$'s are smoothable singularities. In particular $z_g$ is smoothable for all $g \ge 3$.
            By \cite{ABS17}, Th. 10.3, we deduce that all the surfaces $S_g$ are globally smoothable in $\bP^g$. 
            Note that $H^2(S_g,T_{S_g})=0$ can be proved exactly as in \cite{ABS17}, Lemma 10.1.
            
            \noindent
             Finally observe that, since the linear systems $|\cO_{S_g}(1)|$ and $|\cO_{\cS_g(t)}(1)|$ have the same dimension $g$, all curves of $|\cO_{S_g}(1)|$ extend to the general fibre of the smoothing family.
        \end{proof}

 We conclude with a corollary concerning the Wahl map of TV-curves. Recall that the  \emph{Wahl map} of a projective nonsingular curve $C$ of genus $g \ge 3$ is a natural linear map
 $$
 \Phi_C: \bigwedge^2H^0(C,\omega_C) \lra H^0(C,\omega_C^3)
 $$
 which extends linearly the application   sending a bivector $u\wedge v$ to (a section of $H^0(C,\omega_C^3)$ vanishing on) the ramification divisor of the pencil $\langle u,v\rangle \subset |\omega_C|$. For details on $\Phi_C$ we refer the reader to \cite{jW87}.

\begin{corollary}\label{C:treibichwahl}
All   TV-curves of genus $g \ge 3$ are limits of hyperplane sections of K3 surfaces and have non-surjective Wahl map.
\end{corollary}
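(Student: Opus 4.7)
The plan is to dispatch the two assertions separately, using results already in hand.

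For the first assertion, the strategy is to observe that every TV-curve of genus $g$---not merely the general one fixed in \S\ref{S:embedding}---occurs as a smooth hyperplane section of $S_g$. The blow-ups $\sigma:X'\lra X$ at $p$ and $\nu:Y_g\lra X'$ at $p'(g)$ depend only on the linear system $|gE_0+f|$ via its base point and the common tangent direction at that point (Remark \ref{R:tangency}), not on the particular smooth member chosen; hence for any TV-curve $\Gamma$ the proper transform $\ol\Gamma$ belongs to $|\ol\Gamma_g|$. A short intersection computation shows $\ol\Gamma\cdot\ol E=\ol\Gamma\cdot\ol e=0$, so $\ol\Gamma$ avoids the curve $J=\ol E+\ol e$ contracted by $\varphi_{\ol\Gamma_g}$ and maps isomorphically to a smooth hyperplane section of $S_g$. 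Theorem \ref{T:K3limit} then exhibits $\Gamma$ as the central fibre of a one-parameter family of hyperplane sections of the smoothing $\cS_g\lra\Delta$ whose general member lies on a smooth K3 surface, so $\Gamma$ is a limit of hyperplane sections of K3 surfaces.

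For the second assertion, the plan is to combine Wahl's theorem with semicontinuity of the corank. Wahl proved in \cite{jW87} that if a smooth curve $C$ of genus $g\ge 3$ lies as a hyperplane section on a smooth K3 surface in $\bP^g$, then $\Phi_C$ is not surjective. On any flat family of smooth curves of genus $g$ the source and target of the Wahl map have constant dimensions $\binom{g}{2}$ and $5g-5$, so $\Phi$ fits into a morphism between vector bundles of constant ranks on the base; consequently its rank is lower semicontinuous and its corank upper semicontinuous. Applied to a one-parameter family of smooth hyperplane sections of $\cS_g$ degenerating to $\Gamma$, this yields $\mathrm{corank}(\Phi_\Gamma)\ge 1$.

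No serious obstacle is anticipated: the only ingredients to check are the two intersection numbers recalled above, the existence of a flat one-parameter family of smooth curves on $\cS_g\lra\Delta$ with central fibre the given TV-curve $\Gamma$ (furnished by the final sentence of the proof of Theorem \ref{T:K3limit} together with a Bertini argument on the general K3 fibre), and the standard semicontinuity statement for morphisms of vector bundles on a smooth base.
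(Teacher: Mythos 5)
Your proof is correct and follows essentially the same route as the paper: the first assertion is deduced from Theorem \ref{T:K3limit}, and the second from the Wahl--Beauville--M\'erindol theorem on hyperplane sections of K3 surfaces combined with semicontinuity of the corank of the Wahl map in a family. The extra care you take to check that \emph{every} TV-curve (not only the general one) arises as a smooth hyperplane section of $S_g$ disjoint from the contracted curve $J=\ol E+\ol e$ is a worthwhile detail that the paper leaves implicit.
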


\begin{proof}
    The first part is a restatement  of Theorem 
 \ref{T:K3limit}.      It is well known that hyperplane sections of K3 surfaces have non-surjective Wahl map \cite{jW87,BM87}. Then      TV-curves, being limits of curves with non-surjective  Wahl map, have non-surjective Wahl map as well, by semicontinuity.
\end{proof}


 \medskip\noindent
\textsc{Dipartimento di Matematica e Fisica \\ Universit\`a Roma Tre \\ L.go S.L. Murialdo, 1 -  00146 Roma, Italia.}\\
\texttt{sernesi@gmail.com}

\end{document}